\documentclass[reqno, 12pt]{amsart}

\pdfoutput=1

\usepackage{enumerate}
\usepackage{latexsym}
\usepackage[centertags]{amsmath}
\usepackage{amsfonts}
\usepackage{amssymb}
\usepackage{amsthm}
\usepackage{newlfont}
\usepackage{graphics}
\usepackage{color}
\usepackage{float}
\usepackage{diagbox}
\textwidth 480pt \hoffset -60pt \textheight 9in \voffset -30pt
\parindent 8mm
\parskip 2mm
\usepackage{longtable}
\usepackage{rotating}
\usepackage{multirow}

\usepackage{extarrows}

\usepackage[sort,compress,numbers]{natbib}

\usepackage[utf8]{inputenc}

\newtheorem{thm}{Theorem}[section]
\newtheorem{cor}[thm]{Corollary}
\newtheorem{lem}[thm]{Lemma}
\newtheorem{prop}[thm]{Proposition}

\theoremstyle{mydefinition}

\theoremstyle{myremark}
\newtheorem{rem}[thm]{Remark}

\allowdisplaybreaks[4]


\title{A Note on Generalized Repunit Numerical Semigroups}

\author{Feihu Liu$^{1}$, Guoce Xin$^{2, *}$, Suting Ye$^{3}$ and Jingjing Yin$^{4}$}

\address{$^{1, 2, 3, 4}$School of Mathematical Sciences,  Capital Normal University,
 Beijing 100048,  PR China}
\email{$^1$\texttt{liufeihu7476@163.com}\ \& $^2$\texttt{guoce\_xin@163.com}\ \& $^3$\texttt{yesuting0203@163.com}\ \newline \newline \& $^4$\texttt{yinjingj@163.com}}
\date{June 19, 2023}
\thanks{$*$ This work was partially supported by NSFC(12071311).}
\begin{document}
\maketitle

\begin{abstract}
Let $A=(a_1, a_2, ..., a_n)$ be relative prime positive integers with $a_i\geq 2$. The Frobenius number $F(A)$ is the largest integer not belonging to the numerical semigroup $\langle A\rangle$ generated by $A$. The genus $g(A)$ is the number of positive integer elements that are not in $\langle A\rangle$. The Frobenius problem is to find $F(A)$ and $g(A)$ for a given sequence $A$. In this note, we study the Frobenius problem of $A=\left(a,ba+d,b^2a+\frac{b^2-1}{b-1}d,...,b^ka+\frac{b^k-1}{b-1}d\right)$ and obtain formulas for $F(A)$ and $g(A)$ when $a\geq k-1$. Our formulas simplifies further for some special cases, such as repunit, Mersenne and Thabit numerical semigroups. The idea is similar to that in [\cite{LiuXin23},arXiv:2306.03459].
\end{abstract}

\def\D{{\mathcal{D}}}

\noindent
\begin{small}
 \emph{Mathematic subject classification}: Primary 11D07; Secondary 11A67, 11B75, 20M14.
\end{small}

\noindent
\begin{small}
\emph{Keywords}: Numerical semigroup; Ap\'ery set; Frobenius number; Genus; Pseudo-Frobenius number.
\end{small}

\section{Introduction}
In \cite{LiuXin23}, we studied the Frobenius problem for the numerical semigroup $\langle A\rangle=\langle a,2a+d,2^2a+3d,...,2^ka+(2^k-1)d\rangle$.
We show in this note that the same idea applies to the Frobenius problem for the numerical semigroup $\langle A\rangle=\left\langle a,ba+d,b^2a+\frac{b^2-1}{b-1}d,...,b^ka+\frac{b^k-1}{b-1}d\right\rangle$.
Let $b=2$, we can obtain the results in \cite{LiuXin23}. The material in this note is better merged with \cite{LiuXin23}, but that paper has been submitted.

Throughout this paper, $\mathbb{Z}$, $\mathbb{N}$ and $\mathbb{P}$ denote the set of all integers, non-negative integers and positive integers, respectively. We need to introduce some basic definitions in \cite{J.C.Rosales}.

A subset $S$ is a \emph{submonoid} of $\mathbb{N}$ if $S\subseteq \mathbb{N}$, $0\in S$ and $S$ is closed under the addition in $\mathbb{N}$. If $\mathbb{N}\setminus S$ is finite, then we say that $S$ is a \emph{numerical semigroup}. For a sequence (or set) $A=(a_1,a_2,...,a_n)$ with $a_i \in\mathbb{P}$, we denoted by $\langle A\rangle$ the submonoid of $\mathbb{N}$ generated by $A$, that is,
$$\langle A\rangle=\left\{\sum_{i=1}^n x_ia_i \mid n\in \mathbb{P}, x_i\in \mathbb{N}, 1\leq i\leq n \right\}.$$
If $\gcd(A)=1$, then $\langle A\rangle$ is a numerical semigroup (see \cite{J.C.Rosales}). If $\langle A\rangle$ is a numerical semigroup, then we say that $A$ is a \emph{system of generators} of $\langle A\rangle$. If no proper subsequence (or subset) of $A$ generates $\langle A\rangle$, then we say that $A$ is a \emph{minimal system of generators} of $\langle A\rangle$, and the number of $A$ is the embedding dimension of $\langle A\rangle$, denoted by $e(A)$. Moreover, the minimal system of generators is finite and unique (see \cite{J.C.Rosales}).

If $\gcd(A)=1$, then we have the following definitions:
\begin{enumerate}
  \item \emph{The Frobenius number} $F(A)$: The greatest integer not belonging to $\langle A\rangle$.

  \item \emph{The genus (or Sylvester number)} $g(A)$: The cardinality of $\mathbb{N}\backslash \langle A\rangle$.

  \item \emph{The pseudo-Frobenius numbers} $u$: If $u \in \mathbb{Z}\backslash \langle A\rangle$ and $u+s\in \langle A\rangle$ for all $s \in \langle A\rangle\backslash \{0\}$.

  \item The set $PF(A)$: The set of pseudo-Frobenius numbers of $\langle A\rangle$ (see \cite{Rosales2002}).

  \item \emph{The type} $t(A)$ of $\langle A\rangle$: The cardinality of $PF(A)$.
\end{enumerate}
For more knowledge about numerical semigroup, see \cite{A.Assi,J.C.Rosales}.

The Frobenius number $F(A)$ has been widely studied. For $A=(a_1,a_2)$, Sylvester \cite{J. J. Sylvester1} obtained $F(A)=a_1a_2-a_1-a_2$ in 1882. For $n\geq 3$, F. Curtis \cite{F.Curtis} proved that the $F(A)$ can not be given by closed formulas of a certain type. However, many special cases have been studied such as arithmetic progression in \cite{A. Brauer,Roberts1,E. S. Selmer}, geometric sequences in \cite{D.C.Ong}, triangular and tetrahedral sequences in \cite{AMRobles}.
For more special sequences, see \cite{Ramrez Alfonsn,A. Tripathi1,A. Tripathi3,Liu-Xin,Fliuxin22,Rodseth}. Many special numerical semigroups are also considered, such as Fibonacci in \cite{J.M.Marin}, Mersenne in \cite{Rosales2016}, repunit in \cite{Rosales.Repunit}, squares and cubes in \cite{M.Lepilov}, Thabit in \cite{Rosales2015} and other numerical semigroups in  \cite{GuZe2020,KyunghwanSong2020,KyunghwanSong}.

The motivation of this paper comes from the repunit numerical semigroup: $\langle\{ \frac{b^{n+i}-1}{b-1}\mid i\in \mathbb{N}\}\rangle$. Essentially, its minimal system of generators is $S(b,n)=\left(\frac{b^{n}-1}{b-1}, \frac{b^{n+1}-1}{b-1},..., \frac{b^{2n-1}-1}{b-1}\right)$ (\cite{Rosales.Repunit}), that is the embedding dimension $e(S(b,n))=n$. In this paper, we study the following more general case
$$A=(a,Ha+dB)=\left(a,ba+d,b^2a+\frac{b^2-1}{b-1}d,...,b^ka+\frac{b^k-1}{b-1}d\right),$$
where $a,d,b,k\in\mathbb{P}, b\geq 2$ and $\gcd(a,d)=1$.
Here we have the embedding dimension $e(A)\leq k+1$. We can observe that
\begin{enumerate}
\item If $a=2^n-1$, $b=2$, $d=1$, $k=n-1$, then $\langle A\rangle$ is the Mersenne numerical semigroup $S(n)$ in \cite{Rosales2016}.

\item If $a=3\cdot 2^n-1$, $b=2$, $d=1$, $k=n+1$, then $\langle A\rangle$ is the Thabit numerical semigroup $T(n)$ in \cite{Rosales2015}.

\item If $a=(2^m-1)\cdot 2^n-1$, $b=2$, $d=1$, $k=n+m-1$, then $\langle A\rangle$ is a class of numerical semigroups in \cite{GuZeTang}.

\item If $a=(2^m+1)\cdot 2^n-(2^m-1)$, $b=2$, $d=2^m-1$, $k\in\{n+1,n+m-1,n+m\}$, then $\langle A\rangle$ is a class of numerical semigroups in \cite{KyunghwanSong}.

\item If $a=m(2^k-1)+2^{k-1}-1$, $b=2$, $k\geq 3$, then $\langle A\rangle$ is a class of numerical semigroups in \cite{LiuXin23}.

\item If $a=\frac{b^n-1}{b-1}$, $b\geq 2$, $d=1$, $k=n-1$, then $\langle A\rangle$ is the repunit numerical semigroups $S(b,n)$ in \cite{Rosales.Repunit}.

\item If $a=b^{n+1}+\frac{b^n-1}{b-1}$, $b\geq 2$, $d=1$, $k=n+1$, then $\langle A\rangle$ is a class of numerical semigroups in \cite{GuZe2020}.

\item If $a=(b+1)b^n-1$, $b\geq 2$, $d=b-1$, $k=n+1$, then $\langle A\rangle$ is the Thabit numerical semigroup $T_{b,1}(n)$ of the first kind base $b$ in \cite{KyunghwanSong2020}.
\end{enumerate}

The main purpose of this paper is to solve the Frobenius problem of $A=A=(a,Ha+dB)=\left(a,ba+d,b^2a+\frac{b^2-1}{b-1}d,...,b^ka+\frac{b^k-1}{b-1}d\right)$.
Using similar discussions as in paper \cite{LiuXin23}, we can obtain formulas of $F(A)$ and $g(A)$ in Theorem \ref{a-2ad-22a3d}. Specifically, if $k=n-1$ and $a=\frac{b^n-1}{b-1}$, then we can further obtain
\begin{enumerate}
  \item The Frobenius number $F(A)=(b^n+d-1)\cdot\frac{b^n-1}{b-1}-d$.

  \item The genus $g(A)=\frac{(b^n-b)(b^n+d-1)}{2(b-1)}+\frac{b^n(n-1)}{2}$.

  \item The set of Pseudo-Frobenius number $PF(A)=\{F(A), F(A)-d,...,F(A)-(n-2)d\}$.

  \item The type $t(A)=n-1$.
\end{enumerate}

The paper is organized as follows.
In Section 2, we provide some necessary lemmas and related results.
In Section 3, we establish Theorem \ref{a-2ad-22a3d}, which gives formulas of Frobenius number $F(A)$ and genus $g(A)$ where $\left(a,ba+d,b^2a+\frac{b^2-1}{b-1}d,...,b^ka+\frac{b^k-1}{b-1}d\right)$ and $a\ge k-1$.
Section 4 focus on application of Theorem \ref{a-2ad-22a3d} for special $a$ and $k$. It is related to repunit numerical semigroup.
Section 6 is a concluding remark.

\section{Preliminary}

It is convenient to use the short hand notation
$A:=(a, B)=(a, b_1, b_2, ..., b_k)$. Let $\gcd(A)=1$, $a,b_i\in \mathbb{P}$. The set
$$\langle A\rangle=\left\{ ax+\sum_{i=1}^kb_ix_i\ \mid x, x_i\in \mathbb{N}\right\}$$
is a numerical semigroup. Let $w \in \langle A\rangle\backslash \{0\}$. The \emph{Ap\'ery set} of $w$ in $\langle A\rangle$ is $Ape(A,w)=\{s\in \langle A\rangle \mid s-w\notin \langle A\rangle\}$. In \cite{J.C.Rosales}, we can obtain
$$Ape(A,w)=\{N_0,N_1,N_2,...,N_{w-1}\},$$
where $N_r:=\min\{ a_0\mid a_0\equiv r\mod w, \ a_0\in \langle A\rangle\}$, $0\leq r\leq w-1$. We usually take $w:=a$.

A. Brauer and J. E. Shockley \cite{J. E. Shockley}, E. S. Selmer \cite{E. S. Selmer} gave the following results respectively.
\begin{lem}[\cite{J. E. Shockley}, \cite{E. S. Selmer}]\label{LiuXin001}
Suppsoe $A:=(a, B)=(a, b_1, b_2, ..., b_k)$. The \emph{Ap\'ery set} of $a$ in $\langle A\rangle$ is
$Ape(A,a)=\{N_0,N_1,N_2,...,N_{a-1}\}$. Then the Frobenius number and genus of $A$ are respectively:
\begin{align*}
F(A)&=F(a, B)=\max_{r\in \lbrace 0, 1, ..., a-1\rbrace}N_r -a,\\
g(A)&=g(a, B)=\frac{1}{a}\sum_{r=1}^{a-1}N_r-\frac{a-1}{2}.
\end{align*}
\end{lem}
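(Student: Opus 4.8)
The plan is to exploit two facts together: the defining property of the Apéry set, namely that $N_r$ is the \emph{least} element of $\langle A\rangle$ lying in the residue class $r$ modulo $a$, and the fact that $a$ is itself a generator, so that $\langle A\rangle$ is closed under adding $a$. Combining these should pin down each residue class completely and reduce the problem to a counting exercise.

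First I would prove the structural claim that for every $r\in\{0,1,\ldots,a-1\}$,
\[
\{\,s\in\langle A\rangle : s\equiv r \pmod a\,\}=\{\,N_r+ja : j\in\mathbb{N}\,\}.
\]
The inclusion $\supseteq$ is immediate, since $N_r\in\langle A\rangle$, $a\in\langle A\rangle$, and $\langle A\rangle$ is closed under addition. For $\subseteq$, if $s\in\langle A\rangle$ with $s\equiv r\pmod a$, then minimality of $N_r$ forces $s\ge N_r$, and since $s-N_r\equiv 0\pmod a$ we get $s-N_r=ja$ for some $j\in\mathbb{N}$. I would also record that $N_0=0$. The consequence I need is that, within class $r$, the gaps (nonnegative integers not in $\langle A\rangle$) are exactly $r,r+a,\ldots,N_r-a$.

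For the Frobenius number, the largest gap in class $r$ is $N_r-a$, so taking the maximum over all classes gives
\[
F(A)=\max_{0\le r\le a-1}(N_r-a)=\max_{0\le r\le a-1}N_r-a ,
\]
where the relevant maximum is attained at some $r\ne 0$ because $N_0-a=-a$ while $\langle A\rangle\ne\mathbb{N}$ guarantees an actual nonnegative gap. For the genus, class $r$ contributes precisely $\frac{N_r-r}{a}$ gaps, this being the number of terms in $r,r+a,\ldots,N_r-a$; summing over all classes, using $N_0=0$ for the $r=0$ term and $\sum_{r=1}^{a-1}r=\frac{a(a-1)}{2}$, yields
\[
g(A)=\sum_{r=1}^{a-1}\frac{N_r-r}{a}=\frac{1}{a}\sum_{r=1}^{a-1}N_r-\frac{a-1}{2}.
\]

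The argument is entirely elementary, and its only genuine content is the structural claim of the first step; once each residue class is identified with an arithmetic progression starting at $N_r$, both formulas follow by direct enumeration. The single point demanding care is the bookkeeping of nonnegative gaps, together with the observation that the $r=0$ class contributes nothing, so I do not anticipate any deeper obstacle.
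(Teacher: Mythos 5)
Your proof is correct. The paper itself gives no proof of this lemma --- it is quoted directly from the cited references of Brauer--Shockley and Selmer --- and your argument (identifying $\langle A\rangle\cap\{s : s\equiv r \pmod a\}$ with the progression $\{N_r+ja : j\in\mathbb{N}\}$, then taking the maximum gap $N_r-a$ per class for $F(A)$ and summing the $\frac{N_r-r}{a}$ gaps per class for $g(A)$) is exactly the classical argument those references use, with the bookkeeping (including $N_0=0$ and the $r=0$ class contributing nothing) handled correctly.
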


Now we define the following order relation in $\mathbb{Z}$: $a\preceq_{\langle A\rangle} b$ if $b-a \in \langle A\rangle$. It is proved that this relation $\preceq_{\langle A\rangle}$ is an order relation in \cite{J.C.Rosales}.

\begin{lem}[Proposition 2.20, \cite{J.C.Rosales}]\label{Pseudo-FP-Prop}
Let $\langle A\rangle$ be a numerical semigroup. Then
$$PF(A)=\left\{w-a\mid w \in \max\nolimits_{\preceq_{\langle A\rangle}} Ape(A, a) \right\},$$
where $Ape(A,a)=\{N_0,N_1,...,N_{a-1}\}$.
\end{lem}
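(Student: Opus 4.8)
The plan is to recast the definition of pseudo-Frobenius number as a maximality condition and then transport that condition through the Apéry set via the translation $x \mapsto x - a$. First I would observe that, directly from the definition, $u \in PF(A)$ precisely when $u$ is a maximal element of $\mathbb{Z} \setminus \langle A\rangle$ with respect to $\preceq_{\langle A\rangle}$: the requirement that $u + s \in \langle A\rangle$ for every $s \in \langle A\rangle \setminus \{0\}$ says exactly that no element lying strictly above $u$ in the order $\preceq_{\langle A\rangle}$ can remain outside $\langle A\rangle$, which is the assertion that $u$ is $\preceq_{\langle A\rangle}$-maximal among the gaps. Thus the lemma reduces to identifying $\max_{\preceq_{\langle A\rangle}}(\mathbb{Z} \setminus \langle A\rangle)$ with $\{w - a \mid w \in \max_{\preceq_{\langle A\rangle}} Ape(A,a)\}$.

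Next I would use the coset description of $\langle A\rangle$ supplied by the Apéry set. Since $\langle A\rangle = \bigcup_{r=0}^{a-1}\{N_r + ma \mid m \in \mathbb{N}\}$, each $N_r$ being minimal in its residue class modulo $a$, the gaps in class $r$ are exactly $N_r - a, N_r - 2a, \ldots$, whose largest representative is $N_r - a$. For $m \ge 1$ one has $(N_r - a) - (N_r - ma) = (m-1)a \in \langle A\rangle$, so every gap in class $r$ satisfies $N_r - ma \preceq_{\langle A\rangle} N_r - a$. Consequently every $\preceq_{\langle A\rangle}$-maximal gap must occur among the top representatives $\{N_r - a \mid 0 \le r \le a-1\}$, and $N_r - a$ is maximal in $\mathbb{Z}\setminus\langle A\rangle$ if and only if it is maximal merely within this finite set.

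Finally I would exploit the translation invariance of $\preceq_{\langle A\rangle}$: for any constant $c$ we have $x \preceq_{\langle A\rangle} y$ iff $x + c \preceq_{\langle A\rangle} y + c$, since both amount to $y - x \in \langle A\rangle$. Taking $c = -a$ shows that $w \mapsto w - a$ is an order isomorphism from $Ape(A,a) = \{N_0,\ldots,N_{a-1}\}$ onto $\{N_0 - a,\ldots,N_{a-1}-a\}$, so maximal elements correspond bijectively; hence $N_r - a$ is maximal among the top representatives iff $N_r$ is maximal in $Ape(A,a)$. Chaining the three steps yields $PF(A) = \{w - a \mid w \in \max_{\preceq_{\langle A\rangle}} Ape(A,a)\}$.

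The main obstacle I anticipate is not any single computation but the careful bookkeeping in the reduction step: one must verify that comparability of an arbitrary gap to a top representative can always be replaced by comparability between two top representatives, so that maximality in the infinite gap set is genuinely equivalent to maximality in the finite Apéry set. One should also confirm the degenerate case $r = 0$, where $N_0 = 0$ is the $\preceq_{\langle A\rangle}$-minimum and therefore never maximal, consistent with the fact that $-a = N_0 - a \notin PF(A)$.
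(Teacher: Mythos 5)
Your proof is correct, but there is nothing in the paper to compare it against: the paper does not prove this lemma, it simply quotes it as Proposition 2.20 of the Rosales--Garc\'ia-S\'anchez book \cite{J.C.Rosales}. Measured against the standard proof in that reference, your route is genuinely different in organization. The textbook argument is a direct two-way check that never mentions gaps: if $w$ is $\preceq_{\langle A\rangle}$-maximal in $Ape(A,a)$ and $s\in\langle A\rangle\setminus\{0\}$, then $w-a+s\notin\langle A\rangle$ would force $w+s\in Ape(A,a)$ with $w\prec w+s$, contradicting maximality, so $w-a\in PF(A)$; conversely, if $u\in PF(A)$ then $u+a\in Ape(A,a)$, and any $w'\in Ape(A,a)$ with $u+a\preceq_{\langle A\rangle} w'$, $w'\neq u+a$, would give $w'-a=u+(w'-u-a)\in\langle A\rangle$, contradicting $w'\in Ape(A,a)$. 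You instead first identify $PF(A)$ with the $\preceq_{\langle A\rangle}$-maximal elements of $\mathbb{Z}\setminus\langle A\rangle$, then use the coset decomposition $\langle A\rangle\cap(r+a\mathbb{Z})=\{N_r+ma \mid m\in\mathbb{N}\}$ to confine maximal gaps to the finite set $\{N_r-a \mid 0\le r\le a-1\}$, and finally transport maximality through the translation $w\mapsto w-a$. Both proofs rest on the same two facts (translation invariance of $\preceq_{\langle A\rangle}$ and the defining property of the Ap\'ery set), but yours makes the intermediate structure explicit, at the price of the reduction step you correctly flagged: to pass from maximality in the finite set to maximality among all gaps one needs the transitivity argument that a gap $g$ strictly above $N_r-a$ satisfies $g\preceq_{\langle A\rangle} N_s-a$ for some $s$, hence $N_r-a\preceq_{\langle A\rangle} N_s-a$ with $N_s-a\neq N_r-a$; this is routine and your sketch of it is sound, as is your check that $N_0-a=-a$ is never pseudo-Frobenius.
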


Note that $N_0=0$ for all $A$. By the definition of $N_r$, we can easily obtain the following result.
\begin{prop}\label{0201}
Let $A=(a, b_1, ..., b_k)$,  $\gcd(a, d)=1, \ d\in \mathbb{P}$. Then we have
\begin{equation}
\{N_0, N_1, N_2,..., N_{a-1}\}=\{N_{d\cdot 0}, N_{d\cdot 1}, N_{d\cdot 2},..., N_{d\cdot (a-1)}\}.\label{0402}
\end{equation}
\end{prop}

\begin{lem}[\cite{LiuXin23}]\label{0202}
Let $A=(a, h_1a+db_1, ..., h_ka+db_k)$,  $k, h, d\in\mathbb{P}$ and $\gcd(A)=1$, $m\in\mathbb{N}$,  $\gcd(a, d)=1$. For a given $ 0\leq r\leq a-1$,  we have
\begin{equation}\label{0203}
N_{dr}=\min \left\{O_{B}^{H}(ma+r) \cdot a+(ma+r)d \mid m\in \mathbb{N}\right\},
\end{equation}
where
$$O_{B}^{H}(M):=\min\left\{\sum_{i=1}^kh_ix_i \mid \sum_{i=1}^k b_ix_i=M, \ x_i,M\in\mathbb{N}, 1\leq i\leq k\right\}.$$
\end{lem}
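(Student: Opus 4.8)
The plan is to compute $N_{dr}$ straight from its definition as the least element of $\langle A\rangle$ in the residue class $dr$ modulo $a$, by decomposing a generic element of the semigroup according to how many copies of each generator are used.

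First I would write an arbitrary element of $\langle A\rangle$ as
$$ax_0 + \sum_{i=1}^k (h_i a + d b_i)\, x_i = a\Bigl(x_0 + \sum_{i=1}^k h_i x_i\Bigr) + d\sum_{i=1}^k b_i x_i,$$
with $x_0, x_1, \dots, x_k \in \mathbb{N}$. Introducing $M := \sum_{i=1}^k b_i x_i$, the element equals $a\bigl(x_0 + \sum_i h_i x_i\bigr) + dM$ and is therefore congruent to $dM$ modulo $a$. Since we are chasing $N_{dr}$, we impose $dM \equiv dr \pmod{a}$; because $\gcd(a,d)=1$, this is equivalent to $M \equiv r \pmod{a}$, i.e. $M = ma + r$ for some $m \in \mathbb{N}$ (using $0 \le r \le a-1$ and $M \ge 0$). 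This translation of the congruence is the one place where the coprimality hypothesis is genuinely used.

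Next, for a fixed admissible value of $M$ the term $dM$ is constant, so minimizing the element reduces to minimizing the nonnegative quantity $x_0 + \sum_i h_i x_i$ subject to the single constraint $\sum_i b_i x_i = M$. The variable $x_0$ enters with positive coefficient $a$ and is otherwise free, so its optimal value is forced to be $x_0 = 0$; what remains is $\min\bigl\{\sum_i h_i x_i \mid \sum_i b_i x_i = M\bigr\}$, which is precisely $O_B^H(M)$ (a genuine minimum over a finite feasible set, since each $b_i \ge 1$ bounds $x_i \le M$). Hence, for each $m$, the smallest element of $\langle A\rangle$ in class $dr$ obtainable from $M = ma + r$ is exactly $O_B^H(ma+r)\cdot a + (ma+r)d$.

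Finally I would minimize over all $m \in \mathbb{N}$ to obtain $N_{dr}$, observing that any $m$ for which $ma + r$ is not representable by $B$ yields $O_B^H(ma+r) = +\infty$ and is harmlessly dropped from the minimum (and that $m=0$, $r=0$ recovers $N_0 = 0$). The only points requiring care are the clean passage from $dM \equiv dr$ to $M \equiv r$ via $\gcd(a,d)=1$ and the observation that $x_0 = 0$ at the optimum; neither is deep, so I do not anticipate a serious obstacle, and the content of the lemma is really the bookkeeping that isolates the auxiliary optimization $O_B^H$.
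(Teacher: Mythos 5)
Your proposal is correct: the decomposition of a generic semigroup element as $a\bigl(x_0+\sum_i h_ix_i\bigr)+dM$ with $M=\sum_i b_ix_i$, the use of $\gcd(a,d)=1$ to convert $dM\equiv dr \pmod a$ into $M=ma+r$, and the forcing of $x_0=0$ at the optimum is exactly the intended argument. Note that this paper does not reprove the lemma but imports it from \cite{LiuXin23}, where the proof proceeds by essentially this same reduction, so there is no substantive difference in approach.
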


By Lemma \ref{0202}, we can define an intermediate function with respect to $m\in \mathbb{N}$, namely:
$$N_{dr}(m):=O_{B}^{H}(ma+r) \cdot a+(ma+r)d.$$

We also need the following definition.
For a given positive integers sequence $B=(b_1, b_2, ..., b_k)$, $1=b_1<b_2<\cdots <b_k$ and $M\in \mathbb{N}$, let
\begin{equation}\label{hahaha4}
opt_B(M):=\min\left\{\sum_{i=1}^kx_i \mid \sum_{i=1}^kb_ix_i=M, \ \ M,x_i\in \mathbb{N}, 1\leq i\leq k\right\}.
\end{equation}
The problem $opt_B(M)$ is called \emph{the change-making problem} \cite{AnnAdamaszek}. A strategy is called greedy, that is to use as many of the maximum as possible, than as many of the next one as possible, and so on. We denote $grd_B(M)$ the number of elements used in $B$ by the greedy strategy. Then we have $opt_B(M)\leq grd_B(M)$. If the greedy solution is always optimal, i.e., $opt_B(M)=grd_B(M)$ for all $M>0$, then we call the sequence $B$ \emph{orderly}; Otherwise, we call sequence $B$ \emph{non-orderly}.

\begin{lem}[One-Point Theorem, \cite{LJCowen,TCHu,MJMagazine}]\label{one-point}
Suppose $B^{\prime}=(1,b_1,...,b_k)$ is orderly and $b_{k+1}>b_k$. Let $s=\lceil b_{k+1} / b_k\rceil$. Then the sequence $B=(1,b_1,...,b_k,b_{k+1})$ is orderly if and only if $opt_B(sb_k)=grd_B(sb_k)$.
\end{lem}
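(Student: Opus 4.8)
The forward implication is immediate: if $B$ is orderly then $opt_B(M)=grd_B(M)$ for every $M>0$, in particular for $M=sb_k$. The whole content lies in the converse, which I would prove by a minimal-counterexample argument that reduces the global equality to the single evaluation at $sb_k$. Write $g'(x):=opt_{B'}(x)=grd_{B'}(x)$, the two agreeing because $B'$ is orderly, and set $\delta:=sb_k-b_{k+1}$, so $0\le\delta<b_k$ by the definition of $s$. First I would unpack the hypothesis. Greedy on $B$ applied to $sb_k$ uses exactly one copy of $b_{k+1}$, since $b_{k+1}\le sb_k<2b_{k+1}$, and then represents $\delta$ greedily, so $grd_B(sb_k)=1+g'(\delta)$; on the other hand $g'(sb_k)=s$. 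Hence the assumption $opt_B(sb_k)=grd_B(sb_k)$ is equivalent to the single numerical inequality $g'(\delta)\le s-1$, which is the only way the distinguished point enters the argument.

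Now suppose for contradiction that $B$ is not orderly, and let $M$ be the least amount with $grd_B(M)>opt_B(M)$. I would first record that $M\ge b_{k+1}$, because for $M<b_{k+1}$ no representation can involve $b_{k+1}$, so the defect reduces to that of the orderly system $B'$ and vanishes. Writing $M=qb_{k+1}+\rho$ with $q=\lfloor M/b_{k+1}\rfloor\ge 1$ and $0\le\rho<b_{k+1}$, there are two structural reductions. If some optimal representation of $M$ used a copy of $b_{k+1}$, then deleting it, together with the fact that greedy also peels off one $b_{k+1}$, shows the defect is unchanged at $M-b_{k+1}<M$, contradicting minimality; hence $opt_B(M)=g'(M)$. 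Similarly, if $\rho\ge b_k$, then subtracting one $b_k$ lowers both the greedy count and the optimal count by exactly one, using orderliness of $B'$ to control the optimal side, so the defect persists at $M-b_k<M$, again contradicting minimality; hence $\rho<b_k$.

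The decisive step combines these facts with the hypothesis through subadditivity of $g'$. Since $M+q\delta=qsb_k+\rho$ with $\rho<b_k$, orderliness of $B'$ gives the exact value $g'(qsb_k+\rho)=qs+g'(\rho)$, while subadditivity of $opt_{B'}$ gives $g'(M+q\delta)\le g'(M)+q\,g'(\delta)\le g'(M)+q(s-1)$. Comparing the two yields $g'(M)\ge q+g'(\rho)$, which contradicts the established defect $opt_B(M)=g'(M)<q+g'(\rho)=grd_B(M)$. This final estimate is where I expect the real work to sit: the reductions of the previous paragraph are routine minimal-counterexample bookkeeping, but the insight that the lone inequality $g'(\delta)\le s-1$ propagates to every $M$ comes precisely from feeding $q$ copies of the ``defect amount'' $\delta$ into the subadditivity bound and matching the result against the exact greedy evaluation of $g'$ at a multiple of $b_k$ plus a small remainder. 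Orderliness of $B'$ is exactly what makes both the exact evaluation $g'(qsb_k+\rho)=qs+g'(\rho)$ and the two reductions legitimate, so I would be careful to invoke it explicitly at each of these points.
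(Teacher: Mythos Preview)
The paper does not supply its own proof of this lemma: it is quoted as the One-Point Theorem with citations to \cite{LJCowen,TCHu,MJMagazine}, and is used only as a tool in the proof of Lemma~\ref{B137ordely}. So there is nothing in the paper to compare your argument against.

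That said, your proof is correct and is essentially the classical argument found in those references. The key steps---reducing the hypothesis to the single inequality $g'(\delta)\le s-1$, taking a minimal counterexample $M$, eliminating the cases where an optimal solution uses $b_{k+1}$ or where the residue $\rho$ is at least $b_k$, and then closing via subadditivity of $g'$ applied to $M+q\delta=qsb_k+\rho$---are exactly the standard ones. One small point of presentation: in the reduction ``$\rho\ge b_k$'' you say subtracting $b_k$ lowers the optimal count by one ``using orderliness of $B'$''; what you actually use is $opt_B(M-b_k)\le g'(M-b_k)=g'(M)-1=opt_B(M)-1$, where the middle equality holds because $M\ge b_k$ forces $grd_{B'}(M)=1+grd_{B'}(M-b_k)$. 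It would be cleaner to spell that out, but the logic is sound.
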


\section{The Frobenius Problem for $A=\left(a,ba+d,b^2a+\frac{b^2-1}{b-1}d,...,b^ka+\frac{b^k-1}{b-1}d\right)$}

Now we consider the case $A=(a,Ha+dB)=\left(a,ba+d,b^2a+\frac{b^2-1}{b-1}d,...,b^ka+\frac{b^k-1}{b-1}d\right)$, $b\geq 2$, i.e., $H=(b,b^2,...,b^k)$ and $B=\left(1,\frac{b^2-1}{b-1},...,\frac{b^k-1}{b-1}\right)$. We first consider
$$opt_B(M)=\min\left\{\sum_{i=1}^kx_i \mid \sum_{i=1}^k\frac{b^i-1}{b-1}x_i=M, \ \ M,x_i\in \mathbb{N}, 1\leq i\leq k\right\}.$$

By the \emph{One-Point Theorem}, we can obtain the following result.
\begin{lem}\label{B137ordely}
Let $k\in \mathbb{P}$ and $b\geq 2$. Then the sequence $B=\left(1,\frac{b^2-1}{b-1},...,\frac{b^k-1}{b-1}\right)$ is orderly, i.e., $opt_B(M)=grd_B(M)$ for all $M\in \mathbb{P}$.
\end{lem}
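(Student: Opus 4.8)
The plan is to induct on $k$ and feed each step to the One-Point Theorem (Lemma \ref{one-point}). Throughout I would write $c_i := \frac{b^i-1}{b-1}$, so that $B=(c_1,c_2,\dots,c_k)$ with $c_1=1$; the one identity I would lean on is $c_{i+1}=b\,c_i+1$ (equivalently $b\,c_i=c_{i+1}-1$), which follows at once from $\frac{b^{i+1}-1}{b-1}=b\cdot\frac{b^i-1}{b-1}+1$. The base cases $k=1$ (where $B=(1)$) and $k=2$ (where $B=(1,b+1)$ is a two-coin system) are orderly by inspection, so it suffices to pass from an orderly $B=(c_1,\dots,c_k)$ with $k\ge 2$ to $B'=(c_1,\dots,c_{k+1})$.

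For the inductive step I would first pin down the threshold $s=\lceil c_{k+1}/c_k\rceil$ appearing in Lemma \ref{one-point}. Writing $\frac{c_{k+1}}{c_k}=b+\frac{b-1}{b^k-1}$ and noting $0<\frac{b-1}{b^k-1}\le 1$ for $k\ge 1$ (with equality only when $k=1$), I get $s=b+1$ in every case. By the One-Point Theorem it then remains only to check the equality $opt_{B'}(M)=grd_{B'}(M)$ at the single value $M:=s\,c_k=(b+1)c_k$.

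The greedy direction is the easy one. Since $M-c_{k+1}=(b+1)c_k-(b\,c_k+1)=c_k-1\ge 0$ and $2c_{k+1}-M=(b-1)c_k+2>0$, the top coin $c_{k+1}$ is used exactly once, leaving the residue $c_k-1=b\,c_{k-1}$. As $c_k-1<c_k$, greedy skips $c_k$ and spends the residue as $b$ copies of $c_{k-1}$ (because $(c_k-1)/c_{k-1}=b$), so $grd_{B'}(M)=b+1$. The substance is the matching lower bound $opt_{B'}(M)\ge b+1$: in any representation of $M$ the number $x_{k+1}$ of top coins satisfies $x_{k+1}\le 1$ (as $2c_{k+1}>M$); if $x_{k+1}=0$ then all coins are $\le c_k$ and at least $\lceil M/c_k\rceil=b+1$ are needed, while if $x_{k+1}=1$ the residue $c_k-1$ must be assembled from coins $\le c_{k-1}$ (a single $c_k$ already overshoots), costing at least $\lceil (c_k-1)/c_{k-1}\rceil=b$ more, again totalling $b+1$.

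Combining the two bounds with the universal inequality $opt\le grd$ gives $opt_{B'}(M)=grd_{B'}(M)$, so Lemma \ref{one-point} promotes orderliness of $B$ to $B'$ and closes the induction. I expect the only delicate point to be this lower bound, where one must exclude a cheaper representation; but the elementary counting principle ``$t$ coins each of size $\le c$ realize a value $\le tc$'', combined with the exact identities $M=(b+1)c_k$ and $M-c_{k+1}=b\,c_{k-1}$, reduces it to the short case analysis on $x_{k+1}$ above rather than a genuine obstacle.
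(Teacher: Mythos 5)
Your proof is correct and takes essentially the same route as the paper: induction on the length of the sequence, computing the One-Point-Theorem threshold $s=\lceil c_{k+1}/c_k\rceil=b+1$, and using the identity $(b+1)c_k=c_{k+1}+b\,c_{k-1}$ at the single check point. The only difference is that you verify the lower bound $opt_{B'}((b+1)c_k)\geq b+1$ explicitly via the case analysis on $x_{k+1}$, whereas the paper simply asserts $opt=grd=b+1$ from the identity, so your write-up is in fact slightly more complete at that step.
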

\begin{proof}
By induction, we first know that the sequence $(1)$ and $\left(1,\frac{b^2-1}{b-1}\right)$ are orderly sequences. Suppose $\left(1,\frac{b^2-1}{b-1},...,\frac{b^{k-1}-1}{b-1}\right)$ is orderly. We have $s=\left\lceil \frac{b^k-1}{b^{k-1}-1}\right\rceil=\left\lceil \frac{b(b^{k-1}-1)+b-1}{b^{k-1}-1}\right\rceil=b+1$. By $(b+1)\cdot\frac{b^{k-1}-1}{b-1}=\frac{b^k-1}{b-1}+b\cdot\frac{b^{k-2}-1}{b-1}$, we have  $opt_B(s(\frac{b^{k-1}-1}{b-1}))=grd_B(s(\frac{b^{k-1}-1}{b-1}))=b+1$. By Lemma \ref{one-point}, the sequence $B$ is orderly. This completes the proof.
\end{proof}

For a given $M\in \mathbb{P}$ and $B=\left(1,\frac{b^2-1}{b-1},...,\frac{b^k-1}{b-1}\right)$, by $b\cdot \frac{b^i-1}{b-1}+1=\frac{b^{i+1}-1}{b-1}$ and Lemma \ref{B137ordely}, we have the following properties for $x_i$ in $opt_B(M)$.
\begin{enumerate}
  \item $x_k=\left\lfloor \frac{(b-1)M}{b^k-1}\right\rfloor$.

  \item $x_i\in \{0,1,...,b\}$ for every $1\leq i\leq k-1$.

  \item if $2\leq i\leq k-1$ and $x_i=b$, then $x_1=\cdots =x_{i-1}=0$.
\end{enumerate}

If a solution $X=(x_1,x_2,...,x_k)$ of $opt_B(M)$ satisfies the above conditions (1), (2) and (3), we call $X$ the \emph{greedy presentation} of $M$.
We define $R(M)$ to be the set of all greedy presentations of $0,1,2,...,M$. We define a colexicographic order on $R(M)$ as follows:
\begin{align*}
&(x_1^{\prime},x_2^{\prime},...,x_k^{\prime})\preceq (x_1,x_2,...,x_k)
\\ \Longleftrightarrow & x_i^{\prime}=x_i\ \ \text{for any}\ \ i>0\ \ \text{or}
\\& x_j^{\prime}<x_j, x_i^{\prime}=x_i\ \ \text{for a certain}\ \ j>0\ \ \text{and any}\ \ i>j.
\end{align*}
Obviously, the order relation $\preceq$ is a total order on $R(M)$.

\begin{rem}
Lemma \ref{B137ordely} essentially provides the construction process of $X=(x_1,x_2,...,x_k)$ in $R(M)$.
\end{rem}

Now we consider the $O_B^H(M)$ for $B=\left(1,\frac{b^2-1}{b-1},...,\frac{b^k-1}{b-1}\right)$, $H=(b,b^2,...,b^k)$ and any $M\in \mathbb{P}$. We have
\begin{align*}
O_{B}^{H}(M)&=\min\left\{\sum_{i=1}^kb^ix_i \mid \sum_{i=1}^k \frac{b^i-1}{b-1}x_i=M, \ x_i\in\mathbb{N}, 1\leq i\leq k\right\}
\\&=\min\left\{(b-1)M+\sum_{i=1}^kx_i \mid \sum_{i=1}^k \frac{b^i-1}{b-1}x_i=M, \ x_i\in\mathbb{N}, 1\leq i\leq k\right\}.
\end{align*}

\begin{lem}
Let $A=\left(a,ba+d,b^2a+\frac{b^2-1}{b-1}d,...,b^ka+\frac{b^k-1}{b-1}d\right)$, $a,d,b,k\in \mathbb{P}$, $b\geq 2$ and $\gcd(a,d)=1$. Suppose $a\geq k-1$. Then $N_{dr}(m)$ is increasing with respect to $m\in \mathbb{N}$. More precisely, if $X=(x_1,x_2,...,x_k)$ is a greedy presentation of $r$, then
\begin{equation}\label{Ndr-pre}
N_{dr}=\left(\sum_{i=1}^kx_i\right)a+r((b-1)a+d)=\left(\sum_{i=1}^kb^ix_i\right)a+rd.
\end{equation}
\end{lem}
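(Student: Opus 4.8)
The plan is to reduce the monotonicity of $N_{dr}(m)$ to the monotonicity of the auxiliary quantity $O_B^H$, and then to establish the latter by a one-step exchange argument. First I would record, exactly as computed just above the statement, that for every $M\in\mathbb{N}$
$$O_B^H(M)=(b-1)M+opt_B(M),$$
the minimiser being the greedy presentation supplied by Lemma \ref{B137ordely}. Substituting $M=ma+r$ into the defining identity $N_{dr}(m)=O_B^H(ma+r)\,a+(ma+r)d$ from Lemma \ref{0202} and subtracting consecutive terms gives
$$N_{dr}(m+1)-N_{dr}(m)=a\Bigl(O_B^H\bigl((m+1)a+r\bigr)-O_B^H(ma+r)+d\Bigr).$$
Since $a,d\ge 1$, the increasing claim follows at once provided $O_B^H$ is non-decreasing in its argument: the bracket is then at least $d>0$, so $N_{dr}(m)$ is in fact strictly increasing and its minimum over $m\in\mathbb{N}$ is attained at $m=0$.

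The heart of the argument is therefore the claim that $O_B^H(M)\le O_B^H(M+1)$ for all $M\ge 0$, and I expect this to be the main obstacle. I would prove it by producing, from a representation $(y_1,\dots,y_k)$ of $M+1$ realising $O_B^H(M+1)=\sum_i b^i y_i$, a representation of $M$ whose cost $\sum_i b^i x_i$ is no larger. If $y_1\ge 1$, simply lower $y_1$ by one: the value drops by $b_1=1$ to $M$ and the cost drops by $b$. If $y_1=0$, let $j\ge 2$ be least with $y_j\ge 1$ and replace one copy of $b_j$ by $b$ copies of $b_{j-1}$; because $b_j=b\,b_{j-1}+1$ the value decreases by exactly $1$ to $M$, while because $b^{j}=b\cdot b^{j-1}$ the cost $\sum_i b^i x_i$ is unchanged. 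In either case $O_B^H(M)\le O_B^H(M+1)$.

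The only structural inputs to this exchange are the identities $h_i=b^i=(b-1)b_i+1$ and $b_{i+1}=b\,b_i+1$ relating $H$ and $B$; in particular the step is unconditional in $a$, so the standing hypothesis $a\ge k-1$ is not actually consumed by the monotonicity itself (it is the blanket assumption used elsewhere when assembling $F(A)$ and $g(A)$). One should take a moment to check that the exchange keeps all coefficients nonnegative integers and that $j-1\ge 1$ so $b_{j-1}$ is a genuine denomination, but these are immediate.

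Finally, with strict monotonicity in hand, $N_{dr}=N_{dr}(0)=O_B^H(r)\,a+rd$. Writing $X=(x_1,\dots,x_k)$ for the greedy presentation of $r$ and using
$$O_B^H(r)=\sum_{i=1}^k b^i x_i=(b-1)r+\sum_{i=1}^k x_i,$$
where the last equality comes from $\sum_i b_i x_i=r$, yields both displayed forms of \eqref{Ndr-pre} simultaneously. Thus the entire statement rests on the non-decrease of $O_B^H$; once the exchange identity is isolated, everything else is bookkeeping.
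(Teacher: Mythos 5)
Your proof is correct, and it takes a genuinely different route from the paper's. The paper argues directly on greedy presentations: writing $X'$ and $X$ for the greedy presentations of $(m+1)a+r$ and $ma+r$, it uses $x_k'\geq x_k$, discards the remaining structure, and bounds the possible drop in coin count by the maximal greedy sum $\sum_{i=1}^{k-1}x_i\leq b+(k-2)(b-1)$, obtaining
\[
N_{dr}(m+1)-N_{dr}(m)\geq \bigl((b-1)(a-k+1)+d-1\bigr)a,
\]
which is nonnegative exactly because of the hypotheses $a\geq k-1$ and $d\geq 1$ (and is only $\geq 0$, not $>0$, in the boundary case $a=k-1$, $d=1$). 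Your exchange argument instead establishes the stronger structural fact that $O_B^H$ is non-decreasing on all of $\mathbb{N}$, using only the identities $b_1=1$, $b_j=b\,b_{j-1}+1$ and $b^j=b\cdot b^{j-1}$; this yields $N_{dr}(m+1)-N_{dr}(m)\geq ad>0$ with no assumption on $a$ at all. What your route buys: strict monotonicity in every case, and the observation that $a\geq k-1$ is never consumed by this lemma --- it is a blanket hypothesis carried along for the later results (indeed one can check small examples with $a<k-1$, e.g. $b=2$, $k=4$, $a=2$, $d=1$, where the conclusion still holds). What the paper's version buys is that it stays entirely inside the greedy-presentation machinery (the properties following Lemma \ref{B137ordely}) that is reused in Lemma \ref{colex-incre} and Theorem \ref{a-2ad-22a3d}, at the price of a lossier estimate. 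Your concluding bookkeeping --- $N_{dr}=N_{dr}(0)$ via Lemma \ref{0202}, then $O_B^H(r)=\sum_{i=1}^k b^i x_i=(b-1)r+\sum_{i=1}^k x_i$ for the greedy presentation of $r$ --- coincides with the paper's derivation of \eqref{Ndr-pre}.
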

\begin{proof}
Given a $m\in \mathbb{N}$. Assuming $X^{\prime}=(x_1^{\prime},...,x_k^{\prime})$ and $X=(x_1,...,x_k)$ are greedy presentations of $(m+1)a+r$ and $ma+r$, respectively. We can easily get $x_{k}^{\prime}\geq x_k$. Furthermore
$$N_{dr}(m)=\left((b-1)(ma+r)+\sum_{i=1}^kx_i\right)a+(ma+r)d.$$
Therefore we have
\begin{align*}
N_{dr}(m+1)-N_{dr}(m)&=(b-1)a^2+ad+\left(\sum_{i=1}^kx_i^{\prime}-\sum_{i=1}^kx_i\right)a
\\& \geq\left((b-1)a+d+\sum_{i=1}^{k-1}x_i^{\prime}-\sum_{i=1}^{k-1}x_i\right)a
\\& \geq((b-1)a+d-b-(k-2)(b-1))a
\\& =((b-1)(a-k+1)+d-1)a\geq 0.
\end{align*}
Then $N_{dr}(m)$ is increasing with respect to $m\in \mathbb{N}$. Moreover $N_{dr}=N_{dr}(0)$. This completes the proof.
\end{proof}

In Equation \eqref{Ndr-pre}, suppose $X=(x_1,x_2,...,x_k)$ is a greedy presentation of $r$, we call $w(r)=\sum_{i=1}^kb^ix_i$ the \emph{weight} of $r$. Now we have the following result.

\begin{lem}\label{colex-incre}
Suppose $X^{\prime}=(x_1^{\prime},...,x_k^{\prime})\in R(M)$ and $X=(x_1,...,x_k)\in R(M)$ are greedy presentations of $r_1$ and $r_2$, respectively. If $(x_1^{\prime},...,x_k^{\prime})\preceq (x_1,...,x_k)$, then we have $w(r_1)\leq w(r_2)$.
\end{lem}
\begin{proof}
For $X=(x_1,x_2,...,x_k)\in R(M)$, it is easy to obtain that the maximum of $\sum_{i=1}^{k-1}b^ix_i$ is $b^k$. At this point, the possible values for $(x_1,...,x_{k-1})$ are $$(b,b-1,b-1,...,b-1), (0,b,b-1,...,b-1),..., (0,0,0,...,b).$$
We consider the following three cases.

If $x_i^{\prime}=x_i$ for any $1\leq i\leq k$, then $w(r_1)=w(r_2)$.

If $x_k^{\prime}<x_k$, then $x_k\geq x_{k}^{\prime}+1$ and
\begin{align*}
w(r_1)&=\sum_{i=1}^kb^ix_i^{\prime}=\sum_{i=1}^{k-1}b^ix_i^{\prime}+b^kx_k^{\prime}\leq b^k+b^kx_k^{\prime}=b^k(1+x_k^{\prime})\leq b^k x_k\leq w(r_2).
\end{align*}

If there is a certain $1\leq u\leq k-1$, so that $x_u^{\prime}<x_u$ and $x_{u+1}^{\prime}=x_{u+1},...,x_k^{\prime}=x_k$. Similarly we have
\begin{align*}
w(r_1)&=\sum_{i=1}^kb^ix_i^{\prime}=\sum_{i=1}^{u}b^ix_i^{\prime}+\sum_{j=u+1}^{k}b^jx_j^{\prime}
\\&\leq b^u(1+x_u^{\prime})+\sum_{j=u+1}^{k}b^jx_j^{\prime}=b^u(1+x_u^{\prime})+\sum_{j=u+1}^{k}b^jx_j
\\&\leq b^ux_u +\sum_{j=u+1}^{k}b^jx_j\leq w(r_2).
\end{align*}
This completes the proof.
\end{proof}

Now we can obtain the main result of this section.
\begin{thm}\label{a-2ad-22a3d}
Let $A=\left(a,ba+d,b^2a+\frac{b^2-1}{b-1}d,...,b^ka+\frac{b^k-1}{b-1}d\right)$, $a,d,b,k\in \mathbb{P}$, $\gcd(a,d)=1$, $b\geq 2$ and $a\geq k-1$. We have
\begin{align*}
&F(A)=\left((b-1)a-b+d+\left(\sum_{i=1}^kx_i\right)_{a-1}\right)a-d,
\\&g(A)=\sum_{r=1}^{a-1}\left(\sum_{i=1}^kx_i\right)_r+\frac{(a-1)((b-1)a+d-1)}{2},
\end{align*}
where $(\sum_{i=1}^{k}x_i)_r$ is the sum of elements in the greedy presentation of $r$. Obviously $x_k=\left\lfloor \frac{(b-1)r}{b^k-1}\right\rfloor$.
\end{thm}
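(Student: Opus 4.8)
The plan is to reduce both formulas to the closed form for $N_{dr}$ in Equation \eqref{Ndr-pre} and then feed the result into Lemma \ref{LiuXin001}. First I would invoke Proposition \ref{0201}: since $\gcd(a,d)=1$, the multisets $\{N_0,\dots,N_{a-1}\}$ and $\{N_{d\cdot 0},\dots,N_{d\cdot(a-1)}\}$ coincide, so in Lemma \ref{LiuXin001} I may replace $\max_r N_r$ by $\max_r N_{dr}$ and $\sum_r N_r$ by $\sum_r N_{dr}$. By \eqref{Ndr-pre}, writing $\left(\sum_{i=1}^k x_i\right)_r$ for the digit sum of the greedy presentation of $r$ and $w(r)=\sum_{i=1}^k b^i x_i$ for its weight, we have $N_{dr}=\left(\sum_{i=1}^k x_i\right)_r a + r\big((b-1)a+d\big)=w(r)\,a+rd$. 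Both forms are used: the second (through $w$) for locating the maximum, the first for the genus sum. The hypothesis $a\ge k-1$ is exactly what makes the increasing-in-$m$ lemma preceding \eqref{Ndr-pre} applicable, so that $N_{dr}=N_{dr}(0)$ is a single legitimate value throughout.

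For the Frobenius number the crux is to show $\max_{0\le r\le a-1} N_{dr}=N_{d(a-1)}$, and I would do this by proving that $w$ is nondecreasing in $r$. The greedy construction fixes $x_k=\left\lfloor (b-1)r/(b^k-1)\right\rfloor$ (property (1) recorded before Lemma \ref{colex-incre}), which is nondecreasing in $r$; by induction on $k$, recursing on the remainder $r-x_k\cdot\tfrac{b^k-1}{b-1}$, the map $r\mapsto X_r$ sending $r$ to its greedy presentation is monotone for the colexicographic order $\preceq$ on $R(a-1)$, i.e. $r_1\le r_2$ forces $X_{r_1}\preceq X_{r_2}$. Lemma \ref{colex-incre} then gives $w(r)\le w(a-1)$ for every $r\le a-1$, so with $a-1-r\ge 0$ and $d>0$,
\begin{equation*}
N_{d(a-1)}-N_{dr}=\big(w(a-1)-w(r)\big)a+(a-1-r)d\ge 0 .
\end{equation*}
Hence the maximum occurs at $r=a-1$, and substituting $N_{d(a-1)}=\left(\sum_{i=1}^k x_i\right)_{a-1}a+(a-1)\big((b-1)a+d\big)$ into $F(A)=\max_r N_{dr}-a$ and rearranging yields the stated expression.

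For the genus no maximization is needed and the computation is direct. Summing the first form of $N_{dr}$ over $r=1,\dots,a-1$,
\begin{align*}
\sum_{r=1}^{a-1}N_{dr}&=a\sum_{r=1}^{a-1}\left(\sum_{i=1}^k x_i\right)_r+\big((b-1)a+d\big)\frac{(a-1)a}{2}.
\end{align*}
Dividing by $a$ and subtracting $\tfrac{a-1}{2}$ as prescribed by Lemma \ref{LiuXin001} collapses the second term to $\frac{(a-1)((b-1)a+d-1)}{2}$, which is exactly the claimed genus formula.

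The main obstacle is the monotonicity step for the Frobenius number: verifying that the natural order on $\{0,\dots,a-1\}$ is carried to the colexicographic order on greedy presentations, so that Lemma \ref{colex-incre} can be applied to conclude that $w(a-1)$ is maximal. This is plausible from the ``positional'' nature of the greedy presentation described in the Remark following Lemma \ref{B137ordely}, but it is the one place requiring a genuine (inductive) argument rather than bookkeeping. Everything downstream — the substitution and algebraic simplification producing $F(A)$, and the entire genus computation — is routine.
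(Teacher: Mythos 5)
Your proposal is correct and follows essentially the same route as the paper: reduce to the closed form for $N_{dr}$ in Equation \eqref{Ndr-pre}, use Lemma \ref{colex-incre} to place the maximum at $r=a-1$, and feed both the maximum and the sum into Lemma \ref{LiuXin001}. In fact you make explicit two steps the paper leaves implicit --- the appeal to Proposition \ref{0201} to replace $N_r$ by $N_{dr}$, and the inductive argument that $r_1\le r_2$ forces the greedy presentations to be colexicographically ordered --- so your write-up is a more complete version of the paper's own proof.
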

\begin{proof}
By Lemma \ref{colex-incre} and Equation \eqref{Ndr-pre}, we have
$$\max N_{dr}=N_{d(a-1)}=\left(\sum_{i=1}^kx_i\right)_{a-1}\cdot a+(a-1)((b-1)a+d).$$
Further from Lemma \ref{LiuXin001}, we can obtain the Frobenius number formula $F(A)$. From Equation \eqref{Ndr-pre} and Lemma \ref{LiuXin001} again, we can obtain the genus $g(A)$. This completes the proof.
\end{proof}

\section{A Generalization of Repunit Numerical Semigroup}

In this section we mainly focus on the generalization of repunit numerical semigroup.
For $A=\left(a,ba+d,b^2a+\frac{b^2-1}{b-1}d,...,b^ka+\frac{b^k-1}{b-1}d\right)$, suppose $k:=n-1$, $a:=\frac{b^n-1}{b-1}$, $b\geq 2$. Therefore we have
$$A=\Bigg(\frac{b^n-1}{b-1}, b\cdot \frac{b^n-1}{b-1}+d, b^2\cdot \frac{b^n-1}{b-1}+\frac{b^2-1}{b-1}d, ..., b^{n-1}\cdot \frac{b^n-1}{b-1}+\frac{b^{n-1}-1}{b-1}d\Bigg).$$
We consider the numerical semigroup $\langle A\rangle$ generated by $A$.

By $a-1=\frac{b^n-1}{b-1}-1=b\cdot \frac{b^{n-1}-1}{b-1}$,
we have
\begin{align}
R(a-1)&=\Big\{(x_1,x_2,...,x_{n-1}) \mid \ 0\leq x_i\leq b\ \ \text{for}\ \ 1\leq i\leq n-1;\notag
\\&\ \ \ \ \
\text{if}\ \ x_i=b,\ \ \text{then}\ \  x_j=0\ \  \text{for}\ \ j\leq i-1\Big\}.
\end{align}

\begin{thm}\label{m2nd-Mersen}
For $A=\left(a,ba+d,b^2a+\frac{b^2-1}{b-1}d,...,b^ka+\frac{b^k-1}{b-1}d\right)$, suppose $k:=n-1$, $a:=\frac{b^n-1}{b-1}$, $b,n\geq 2$ and $\gcd(a,d)=1$. We have
\begin{align*}
&F(A)=(b^n+d-1)\cdot\frac{b^n-1}{b-1}-d,
\\&g(A)=\frac{(b^n-b)(b^n+d-1)}{2(b-1)}+\frac{b^n(n-1)}{2}.
\end{align*}
\end{thm}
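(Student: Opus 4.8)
The plan is to derive both formulas as direct specializations of Theorem \ref{a-2ad-22a3d} with $k = n-1$ and $a = \frac{b^n-1}{b-1}$. First I would verify the hypothesis $a \ge k-1$: since $a = 1 + b + \cdots + b^{n-1} \ge 2^n - 1$ while $k-1 = n-2$, the inequality holds for all $b,n \ge 2$, so Theorem \ref{a-2ad-22a3d} is applicable and it remains only to evaluate the two structural quantities appearing there for this choice of $a$ and $k$.

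For the Frobenius number the only input I need is $\left(\sum_{i=1}^{n-1}x_i\right)_{a-1}$, the coordinate-sum of the greedy presentation of $a-1$. Using $a-1 = b\cdot\frac{b^{n-1}-1}{b-1}$, the greedy construction of Lemma \ref{B137ordely} forces $x_{n-1} = \lfloor \frac{(b-1)(a-1)}{b^{n-1}-1}\rfloor = b$, leaving remainder $0$, so the greedy presentation of $a-1$ is $(0,\ldots,0,b)$ and $\left(\sum_i x_i\right)_{a-1} = b$. Substituting this into the formula of Theorem \ref{a-2ad-22a3d} cancels the $-b$ against the $+b$ and collapses the bracket to $(b-1)a + d$, giving $F(A) = \big((b-1)a+d\big)a - d$; since $(b-1)a = b^n-1$ this is exactly $(b^n+d-1)\cdot\frac{b^n-1}{b-1} - d$.

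For the genus, by Theorem \ref{a-2ad-22a3d} it suffices to evaluate $S := \sum_{r=1}^{a-1}\left(\sum_i x_i\right)_r = \sum_{X \in R(a-1)}\sum_{i=1}^{n-1}x_i$, since the remaining summand $\frac{(a-1)((b-1)a+d-1)}{2}$ is already explicit, with $(b-1)a+d-1 = b^n+d-2$ and $a-1 = \frac{b^n-b}{b-1}$. I would compute $S$ from the explicit description of $R(a-1)$ recorded just before the theorem: every tuple has all coordinates in $\{0,\ldots,b\}$, and at most one coordinate can equal $b$, because $x_i = b$ forces every lower coordinate to vanish and thus forbids a second entry equal to $b$. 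I therefore split $R(a-1)$ into the $b^{n-1}$ tuples with all $x_i \le b-1$ and, for each $i$ with $1 \le i \le n-1$, the $b^{\,n-1-i}$ tuples whose unique peak is $x_i = b$ (coordinates below $i$ forced to $0$, coordinates above $i$ free in $\{0,\ldots,b-1\}$). Summing coordinate-sums over each class produces a plain geometric series $\sum_j b^j$ together with a weighted series $\sum_j j\,b^j$.

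The main obstacle is this genus bookkeeping: I must evaluate $\sum_{j=0}^{n-2} j\,b^j$ in closed form and combine it with the free-coordinate contributions $\tfrac{(b-1)b}{2}b^{\,m}$ and the peak contributions $b^{\,n-i}$. After collecting terms over the common denominator $2(b-1)$, the expected cancellation of the $b^{n-1}$-terms should leave $S = \frac{b^n-b}{2(b-1)} + \frac{b^n(n-1)}{2}$. Adding the explicit term $\frac{(b^n-b)(b^n+d-2)}{2(b-1)}$ and absorbing the extra $\frac{b^n-b}{2(b-1)}$ into $\frac{(b^n-b)(b^n+d-1)}{2(b-1)}$ then yields $g(A) = \frac{(b^n-b)(b^n+d-1)}{2(b-1)} + \frac{b^n(n-1)}{2}$, completing the proof. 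I expect the weighted-sum cancellation to be the one place demanding care; every other step is a mechanical substitution.
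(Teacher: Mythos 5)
Your proposal is correct and follows essentially the same route as the paper: both specialize Theorem \ref{a-2ad-22a3d}, use that the greedy presentation of $a-1=b\cdot\frac{b^{n-1}-1}{b-1}$ is $(0,\ldots,0,b)$ so that $\left(\sum_i x_i\right)_{a-1}=b$, and compute $\sum_{r=1}^{a-1}\left(\sum_i x_i\right)_r=\frac{b^n-b}{2(b-1)}+\frac{b^n(n-1)}{2}$ by the identical decomposition of $R(a-1)$ into tuples with all entries at most $b-1$ and tuples with a unique peak equal to $b$. Your explicit verification of the hypothesis $a\geq k-1$ is a small point the paper leaves implicit, but otherwise the two arguments coincide.
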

\begin{proof}
By Theorem \ref{a-2ad-22a3d} and $a-1=b\cdot \frac{b^{n-1}-1}{b-1}$, we have
\begin{align*}
F(A)&=\left((b-1)\cdot \frac{b^n-1}{b-1}-b+d+b\right)\cdot \frac{b^n-1}{b-1}-d
\\&=(b^n+d-1)\cdot\frac{b^n-1}{b-1}-d.
\end{align*}
Now we consider the genus $g(A)$. Based on the composition of $R(a-1)$ mentioned above, we have
\begin{align*}
\sum_{r=1}^{a-1}\left(\sum_{i=1}^{n-1} x_i\right)_r=&(n-1)b^{n-2}\cdot\sum_{i=0}^{b-1}i+\sum_{j=1}^{n-1}\left(b\cdot b^{n-1-j}+(n-1-j)\cdot b^{n-2-j}\cdot\sum_{i=0}^{b-1}i\right)
\\=&\frac{(n-1)b^{n-1}(b-1)}{2}+\sum_{j=1}^{n-1}\left(b^{n-j}+\frac{b^{n-1-j}(b-1)(n-j-1)}{2}\right)
\\=&\frac{b^n-b}{2(b-1)}+\frac{b^n(n-1)}{2}.
\end{align*}
By Theorem \ref{a-2ad-22a3d}, we have
\begin{align*}
g(A)&=\frac{(b^n-b)(b^n+d-1)}{2(b-1)}+\frac{b^n(n-1)}{2}.
\end{align*}
This completes the proof.
\end{proof}

Let $d=1$ in Theorem \ref{m2nd-Mersen}, the $\langle A\rangle$ becomes the repunit numerical semigroup (\cite{Rosales.Repunit}). Clearly, we have the following corollary.

\begin{cor}[Theorem 20, Theorem 25, \cite{Rosales.Repunit}]
Let $n,b\geq 2$, $d=1$ in Theorem \ref{m2nd-Mersen}. Then $S(b,n)=\langle A\rangle$ is the repunit numerical semigroup in \cite{Rosales.Repunit}. Furthermore, we have
\begin{align*}
&F(S(b,n))=\frac{b^n-1}{b-1}b^n-1,
\\&g(S(b,n))=\frac{b^n}{2}\left(\frac{b^n-1}{b-1}+n-1\right).
\end{align*}
\end{cor}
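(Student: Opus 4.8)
The plan is to specialize Theorem~\ref{a-2ad-22a3d} to the case $k=n-1$ and $a=\frac{b^n-1}{b-1}$, so that the entire proof reduces to two explicit computations: evaluating the Frobenius formula at this value of $a$, and summing the greedy-presentation weights $\left(\sum_{i=1}^{n-1}x_i\right)_r$ over $r=1,\dots,a-1$. The crucial simplifying observation is the identity $a-1=\frac{b^n-1}{b-1}-1=b\cdot\frac{b^{n-1}-1}{b-1}$, which forces $x_{n-1}=\left\lfloor\frac{(b-1)(a-1)}{b^{n-1}-1}\right\rfloor=b$ in the top coordinate and, more importantly, gives a clean combinatorial description of the full set $R(a-1)$ of greedy presentations already displayed above: each $x_i$ ranges over $\{0,1,\dots,b\}$ subject to the constraint that $x_i=b$ forces all lower coordinates to vanish.

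First I would plug $a=\frac{b^n-1}{b-1}$ and $\left(\sum x_i\right)_{a-1}=b$ directly into the Frobenius formula of Theorem~\ref{a-2ad-22a3d}. The bracketed factor becomes $(b-1)\cdot\frac{b^n-1}{b-1}-b+d+b=(b^n-1)+d$, so $F(A)=(b^n+d-1)\cdot\frac{b^n-1}{b-1}-d$ falls out immediately with no serious work. The genus then requires the sum $S:=\sum_{r=1}^{a-1}\left(\sum_{i=1}^{n-1}x_i\right)_r$, after which Theorem~\ref{a-2ad-22a3d} yields $g(A)=S\cdot\frac{b^n+d-1}{\,\cdots\,}$ — more precisely $g(A)=S+\frac{(a-1)((b-1)a+d-1)}{2}$, which I would reorganize to match the stated closed form.

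The real content, and the step I expect to be the main obstacle, is computing $S$ in closed form. The clean approach is to sum the coordinate total over the combinatorial structure of $R(a-1)$: I would count, for each coordinate position $i$ and each value $v\in\{0,\dots,b\}$, how many greedy presentations in $R(a-1)$ have $x_i=v$, weighting by $v$. The constraint ``$x_i=b\Rightarrow x_j=0$ for $j<i$'' splits the count naturally into a generic contribution (where no coordinate hits the boundary value $b$, giving factors of $\sum_{i=0}^{b-1}i=\frac{b(b-1)}{2}$ across the free coordinates) and the boundary contributions indexed by the position $j$ at which a coordinate equals $b$. This is exactly the bookkeeping recorded in the displayed computation: the first term $(n-1)b^{n-2}\sum_{i=0}^{b-1}i$ handles the generic part, and the sum $\sum_{j=1}^{n-1}\bigl(b\cdot b^{n-1-j}+(n-1-j)b^{n-2-j}\sum_{i=0}^{b-1}i\bigr)$ handles the boundary cases. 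I would verify each counting factor carefully, since miscounting the number of free positions or the admissible value range is the easiest place to introduce an error.

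Finally, I would collapse the resulting expression. After substituting $\sum_{i=0}^{b-1}i=\frac{b(b-1)}{2}$, the two pieces simplify via a telescoping/geometric sum: the leading terms assemble into $\frac{(n-1)b^n}{2}$ while the remaining geometric series $\sum_{j=1}^{n-1}b^{n-j}$ sums to $\frac{b^n-b}{b-1}$, and I would check that the $(n-1-j)$-weighted terms cancel against the generic part to leave $\frac{b^n-b}{2(b-1)}+\frac{b^n(n-1)}{2}$ for $S$. Feeding this into $g(A)=S+\frac{(a-1)((b-1)a+d-1)}{2}$ and using $a-1=b\cdot\frac{b^{n-1}-1}{b-1}$ and $(b-1)a=b^n-1$ should produce the claimed $g(A)=\frac{(b^n-b)(b^n+d-1)}{2(b-1)}+\frac{b^n(n-1)}{2}$; the main risk is algebraic, so I would cross-check the final formula against a small case such as $b=2$, $n=2$ to confirm consistency with Theorem~\ref{m2nd-Mersen} and with the $d=1$ repunit corollary.
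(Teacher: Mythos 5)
Your route is essentially the paper's own: the corollary is intended to follow by simply setting $d=1$ in Theorem \ref{m2nd-Mersen}, and your proposal re-derives Theorem \ref{m2nd-Mersen} from Theorem \ref{a-2ad-22a3d} exactly as the paper does (same specialization $k=n-1$, $a=\frac{b^n-1}{b-1}$, same use of $a-1=b\cdot\frac{b^{n-1}-1}{b-1}$ and the structure of $R(a-1)$, same closed form for $\sum_{r=1}^{a-1}\bigl(\sum_{i=1}^{n-1}x_i\bigr)_r$). The Frobenius half goes through: putting $d=1$ in $(b^n+d-1)\cdot\frac{b^n-1}{b-1}-d$ gives $\frac{b^n-1}{b-1}b^n-1$, as stated.

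The genuine gap is the one step you assert but never execute. Substituting $d=1$ into the genus formula you correctly derive gives
\[
g(A)=\frac{(b^n-b)\,b^n}{2(b-1)}+\frac{b^n(n-1)}{2}
=\frac{b^n}{2}\left(\frac{b^n-b}{b-1}+n-1\right)
=\frac{b^n}{2}\left(\frac{b^n-1}{b-1}+n-2\right),
\]
which is \emph{not} the stated $\frac{b^n}{2}\left(\frac{b^n-1}{b-1}+n-1\right)$; the two differ by $\frac{b^n}{2}$. In fact the corollary as printed is false: for $b=3$, $n=2$ it is not even an integer ($22.5$, while $S(3,2)=\langle 4,13\rangle$ has genus $18$), and for $b=2$, $n=2$ the semigroup is $\langle 3,7\rangle$ with genus $6$, whereas the printed formula gives $8$. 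So the statement contains a misprint ($n-1$ should be $n-2$, equivalently $\frac{b^n-1}{b-1}$ should be $\frac{b^n-b}{b-1}$). Your proposal claims the substitution ``should produce'' the stated formula and that a $b=2$, $n=2$ check would ``confirm consistency'' --- that is precisely the check that exposes the discrepancy, and you did not carry it out. As written, your proof cannot terminate in the statement as printed; you must either prove the corrected formula and flag the error, or the final matching step fails.
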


\begin{thm}\label{gener-mers-type}
For $A=\left(a,ba+d,b^2a+\frac{b^2-1}{b-1}d,...,b^ka+\frac{b^k-1}{b-1}d\right)$, suppose $k:=n-1$, $a:=\frac{b^n-1}{b-1}$, $b,n\geq 2$ and $\gcd(a,d)=1$. Then $t(A)=n-1$. Furthermore
$$PF(A)=\{F(A), F(A)-d,...,F(A)-(n-2)d\}.$$
\end{thm}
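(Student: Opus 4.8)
The plan is to compute the set of maximal elements of $Ape(A,a)$ under the order $\preceq_{\langle A\rangle}$ and then apply Lemma \ref{Pseudo-FP-Prop}, which expresses $PF(A)$ as $\{w-a \mid w\in\max_{\preceq_{\langle A\rangle}}Ape(A,a)\}$. Since $Ape(A,a)=\{N_0,N_1,\dots,N_{a-1}\}$ and by Proposition \ref{0201} this equals $\{N_{d\cdot 0},N_{d\cdot 1},\dots,N_{d\cdot(a-1)}\}$, it suffices to identify which $N_{dr}$ are maximal with respect to the semigroup order. By Equation \eqref{Ndr-pre}, each $N_{dr}$ is determined by the greedy presentation of $r$, so the whole question can be pushed down to a comparison problem on the set $R(a-1)$ of greedy presentations, whose explicit description is given in the displayed formula for $R(a-1)$ preceding the theorem.

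First I would establish when $N_{dr_1}\preceq_{\langle A\rangle}N_{dr_2}$, i.e.\ when $N_{dr_2}-N_{dr_1}\in\langle A\rangle$. Using \eqref{Ndr-pre} in the form $N_{dr}=(\sum_i b^i x_i)a+rd$ with $X=(x_1,\dots,x_{n-1})$ the greedy presentation of $r$, I expect to show that a covering/comparison in $\preceq_{\langle A\rangle}$ corresponds to incrementing the presentation coordinatewise in a controlled way (adding generators $b^ia+\frac{b^i-1}{b-1}d$ raises the presentation). The candidates for maximal elements should be exactly the $r$ whose greedy presentation cannot be obtained from any larger $N_{dr'}$ by subtracting a nonzero generator; intuitively these are the presentations that are ``locally maximal'' in $R(a-1)$. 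The claim $t(A)=n-1$ says there are precisely $n-1$ such maximal Apéry elements, and the formula $PF(A)=\{F(A),F(A)-d,\dots,F(A)-(n-2)d\}$ says the corresponding values of $w-a=N_{dr}-a$ form an arithmetic progression with common difference $d$ starting at $F(A)=N_{d(a-1)}-a$.

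The concrete step is to identify the $n-1$ maximal presentations. Recall $a-1=b\cdot\frac{b^{n-1}-1}{b-1}$, so the top presentation is $(b,b-1,b-1,\dots,b-1)$, realizing $r=a-1$ and giving $F(A)$. I would then exhibit, for each $j\in\{0,1,\dots,n-2\}$, a presentation $X^{(j)}$ whose weight $\sum_i b^i x_i$ equals the maximal weight $b^n$ (the value computed inside the proof of Lemma \ref{colex-incre}) but whose residue $r$ decreases by exactly one unit per step; concretely the presentations $(0,b,b-1,\dots,b-1),(0,0,b,b-1,\dots,b-1),\dots,(0,\dots,0,b)$ together with the top one. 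All of these share the same weight $b^n$, hence by \eqref{Ndr-pre} satisfy $N_{dr}=b^n a+rd$, so they differ from one another only through $rd$; as $r$ ranges over the $n-1$ corresponding values one checks these are $a-1,\ a-1-\tfrac{?}{},\dots$ spaced so that $w-a$ takes the values $F(A)-jd$ for $0\le j\le n-2$. I would verify these $n-1$ elements are pairwise incomparable (their differences are multiples of $d$ coprime to $a$, and being of equal weight neither dominates the other in $\langle A\rangle$) and that every other $N_{dr}$ lies below one of them.

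The main obstacle will be the incomparability and maximality verification: I must show that no two of these $n-1$ Apéry elements are comparable under $\preceq_{\langle A\rangle}$ (so that all of them genuinely contribute to $PF(A)$, giving $t(A)=n-1$ rather than fewer), and that every non-maximal $N_{dr}$ is dominated by one of them. The first requires showing $N_{dr_2}-N_{dr_1}=(r_2-r_1)d\notin\langle A\rangle$ for these specific equal-weight pairs, which reduces to a statement that small positive multiples of $d$ are gaps of $\langle A\rangle$; since every generator other than $a$ is $\equiv$ a multiple of $d$ modulo $a$ only through its $B$-part, I expect this to follow from the greedy structure already encoded in $R(a-1)$ and the weight monotonicity of Lemma \ref{colex-incre}. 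Once incomparability and domination are in hand, Lemma \ref{Pseudo-FP-Prop} delivers the stated $PF(A)$ and hence $t(A)=|PF(A)|=n-1$.
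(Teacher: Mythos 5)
Your skeleton coincides with the paper's own proof: you pick out the same $n-1$ candidate greedy presentations $(b,b-1,\dots,b-1),\ (0,b,b-1,\dots,b-1),\ \dots,\ (0,\dots,0,b)$, observe they all have weight $b^n$ so that by \eqref{Ndr-pre} the corresponding Ap\'ery elements are $N_{dr}=b^na+rd$ with $r=a-1,a-2,\dots,a-(n-1)$ (hence values $F(A)-jd$, $0\le j\le n-2$), and you finish with Lemma \ref{Pseudo-FP-Prop}. So there is no divergence of method; the issue is that the decisive step is left unproven.

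That step is the pairwise incomparability, which reduces to showing $td\notin\langle A\rangle$ for $1\le t\le n-2$, and the tools you cite for it would not deliver it: Lemma \ref{colex-incre} compares \emph{weights} of greedy presentations under the colexicographic order and says nothing about whether a given integer is representable in $\langle A\rangle$. The paper closes this by a direct arithmetic argument: if $td=y_0a+\sum_{i=1}^{n-1}y_i\bigl(b^ia+\frac{b^i-1}{b-1}d\bigr)$ with the $y_i\in\mathbb{N}$ not all zero, then $\bigl(t-\sum_{i}y_i\frac{b^i-1}{b-1}\bigr)d=\bigl(y_0+\sum_{i}b^iy_i\bigr)a>0$, so $\gcd(a,d)=1$ forces $a\mid\bigl(t-\sum_{i}y_i\frac{b^i-1}{b-1}\bigr)$; since $0<t-\sum_{i}y_i\frac{b^i-1}{b-1}\le t\le n-2<a$ this is impossible, and the alternative $t=\sum_{i}y_i\frac{b^i-1}{b-1}$ makes the right-hand side zero, a contradiction. (Alternatively you could close the gap with machinery you already invoked: $td\in\langle A\rangle$ iff $td\ge N_{dt}$, and \eqref{Ndr-pre} gives $N_{dt}=w(t)a+td>td$ for $1\le t\le n-2$.) Without some such argument you have not excluded that two of your $n-1$ candidates are comparable, i.e.\ that $t(A)<n-1$, which is exactly the content of the theorem. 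A secondary soft spot, which your write-up shares with the paper's own terse treatment, is the domination claim that every other $N_{dr}$ lies below one of the candidates; this needs (and admits) a short argument that any greedy presentation can be completed to one of the $n-1$ maximal ones by adding generators, each addition increasing $N_{dr}$ by an element of $\langle A\rangle$.
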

\begin{proof}
Let $X=(x_1,x_2,...,x_{n-1})$ be a greedy presentation of $r$, $0\leq r\leq a-1$.
By Equation \eqref{Ndr-pre}, we have
\begin{align*}
N_{dr}&=\left(\sum_{i=1}^{n-1}b^ix_i\right)a+\sum_{i=1}^{n-1}\frac{b^i-1}{b-1}x_id
\\&=\left(\sum_{i=1}^{n-1}b^ix_i\right)a+\sum_{i=1}^{n-1}\frac{b^ix_i}{b-1}d
-\sum_{i=1}^{n-1}\frac{x_i}{b-1}d.
\end{align*}
For a given $0\leq r\leq a-1$, we have
$r=s\cdot \frac{b^{n-1}-1}{b-1}+r^{\prime}$, $0\leq s\leq b$ and $0\leq r^{\prime}\leq \frac{b^{n-1}-b}{b-1}$. Therefore, under the order relation $\preceq_{\langle A\rangle}$,  the greedy presentations of the maximal elements in $R(a-1)$ are contained in the following elements:
$$(b,b-1,b-1,...,b-1,b-1),\ (0,b,b-1,...,b-1,b-1),\ (0,0,b,...,b-1,b-1),\ ... ,\ (0,0,0,...,0,b).$$

For the above greedy presentations, we always have
$$\sum_{i=1}^{n-1}b^ix_i=b^n.$$
For the above maximal elements, let $X^{\prime}=(x_1^{\prime},...,x_{n-1}^{\prime})\in R(a-1)$ and $X=(x_1,...,x_{n-1})\in R(a-1)$ be greedy presentations of $r_1$ and $r_2$, respectively.
We need to consider the following case
$$N_{dr_1}-N_{dr_2}=\sum_{i=1}^{n-1}\frac{x_i}{b-1}d-\sum_{i=1}^{n-1}\frac{x_i^{\prime}}{b-1}d=td,\ \ t=1,2,...,n-2.$$
Now we need to determine whether $td$ belongs to $\langle A\rangle$. If $td\in \langle A\rangle$, there would exist $(y_0,y_1,...,y_{n-1})$ such that
$$td=y_0a+y_1(ba+d)+\cdots +y_{n-1}\left(b^{n-1}a+\frac{b^{n-1}-1}{b-1}d\right).$$
These $y_0,...,y_{n-1} \in \mathbb{N}$ are not all $0$.
Then we have
$$(n-2)d\geq \left(t-y_1-y_2\frac{b^2-1}{b-1}-\cdots -y_{n-1}\frac{b^{n-1}-1}{b-1}\right)d=(y_0+by_1+\cdots +b^{n-1}y_{n-1})a>0.$$
By $\gcd(a,d)=1$, we have $a | \left(t-y_1-y_2\frac{b^2-1}{b-1}-\cdots -y_{n-1}\frac{b^{n-1}-1}{b-1}\right)$.
However $a>n-2\geq t$, we have $t=y_1+y_2\frac{b^2-1}{b-1}+\cdots +y_{n-1}\frac{b^{n-1}-1}{b-1}$.
So $(y_0+by_1+\cdots +b^{n-1}y_{n-1})a=0$, a contradiction.

Therefore we have
$$\max\nolimits_{\preceq_{\langle A\rangle}} Ape(A, a)=\{N_{dr} \mid r=a-\{1,2,...,n-1\}\}.$$
By Lemma \ref{Pseudo-FP-Prop}, we have
\begin{align*}
PF(A)&=\{ N_{dr}-a \mid r=a-\{1,2,...,n-1\}\}
\\&=\left\{(b^n-1)a+\frac{b^nd}{b-1}-\left\{\frac{bd}{b-1},\frac{(2b-1)d}{b-1},\frac{(3b-2)d}{b-1},...,
\frac{((n-1)b-(n-2))d}{b-1}\right\} \right\}
\\&=\{F(A),F(A)-d,...,F(A)-(n-2)d\}.
\end{align*}
This completes the proof.
\end{proof}

Similarly, we can obtain the following corollary.
\begin{cor}[Theorem 23, \cite{Rosales.Repunit}]
Let $n,b\geq 2$, $d=1$ in Theorem \ref{m2nd-Mersen}. Then $S(b,n)=\langle A\rangle$ is the repunit numerical semigroup in \cite{Rosales.Repunit}. We have $t(S(b,n))=n-1$ and
\begin{align*}
PF(S(b,n))=\{F(S(n)),F(S(n))-1,...,F(S(n))-(n-2)\}.
\end{align*}
\end{cor}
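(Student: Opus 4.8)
The plan is to determine the $\preceq_{\langle A\rangle}$-maximal elements of the Ap\'ery set $Ape(A,a)$ explicitly and then feed them into Lemma~\ref{Pseudo-FP-Prop}. Writing $g_i:=b^ia+\frac{b^i-1}{b-1}d$ ($1\le i\le n-1$) for the non-$a$ generators and letting $X=(x_1,\dots,x_{n-1})$ be the greedy presentation of $r$, Equation~\eqref{Ndr-pre} yields $N_{dr}=\sum_{i=1}^{n-1}x_ig_i$, so every Ap\'ery element is the combination of the $g_i$ with coefficients read off from $X$. Using the explicit form of $R(a-1)$ recorded just before the theorem, a direct computation shows that the weight $w(r)=\sum_i b^ix_i$ attains its maximum value $b^n$ exactly on the $n-1$ presentations $C_j=(0,\dots,0,b,b-1,\dots,b-1)$ carrying the single entry $b$ in position $j$, and that $C_j$ is the greedy presentation of the residue $r_j:=a-(n-j)$; as $j$ runs over $1,\dots,n-1$ these residues are $a-1,a-2,\dots,a-(n-1)$.

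I would next show that every $\preceq_{\langle A\rangle}$-maximal element is among the $N_{dr_j}$, by checking that every other greedy presentation can be strictly raised inside $Ape(A,a)$. The point is that adding $g_i$ amounts to incrementing the $i$-th digit: whenever the incremented tuple is again a legal greedy presentation of some residue $\le a-1$, one gets $N_{dr}\prec_{\langle A\rangle}N_{d(r+\frac{b^i-1}{b-1})}$. If all digits of $X$ are $\le b-1$, then incrementing $x_1$ is legal and lands at $r+1\le a-1$; if $X$ has its unique entry $b$ in a position $j$ but some later digit is $<b-1$, then incrementing the first such digit is legal and lands at a residue $\le a-(n-j)\le a-1$. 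In either case $N_{dr}$ is not maximal, so only the $C_j$ can survive.

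The technical heart is to prove the $n-1$ survivors are pairwise $\preceq_{\langle A\rangle}$-incomparable; this simultaneously certifies that each $C_j$ is maximal (a dominated $C_j$ would be dominated by some maximal element, necessarily another $C_{j'}$) and shows the number of maximal elements is exactly $n-1$. Since all $C_j$ share weight $b^n$, Equation~\eqref{Ndr-pre} gives $N_{dr_{j'}}-N_{dr_j}=(r_{j'}-r_j)d=td$ with $1\le t=j'-j\le n-2$, so incomparability reduces to the claim that $td\notin\langle A\rangle$ for $1\le t\le n-2$. I would argue by contradiction: a putative representation $td=y_0a+\sum_i y_ig_i$ rearranges to $\bigl(t-\sum_i y_i\tfrac{b^i-1}{b-1}\bigr)d=\bigl(y_0+\sum_i b^iy_i\bigr)a$, whose right side is a nonnegative multiple of $a$. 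Since $\gcd(a,d)=1$ and $0\le t\le n-2<n\le a=\frac{b^n-1}{b-1}$, the left bracket is divisible by $a$ yet lies in $[0,a)$, hence vanishes; then the right side vanishes, forcing all $y_i=0$ and $td=0$, a contradiction. I expect this non-membership statement to be the main obstacle, since it is precisely where $\gcd(a,d)=1$ and the bound $a>n-2$ enter.

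Finally, with $\max_{\preceq_{\langle A\rangle}}Ape(A,a)=\{N_{d(a-1)},N_{d(a-2)},\dots,N_{d(a-(n-1))}\}$ established, Lemma~\ref{Pseudo-FP-Prop} gives $PF(A)=\{N_{dr}-a\}$ over these residues. Each such element equals $b^na+rd$, so $N_{dr}-a=(b^n-1)a+rd$; writing $r=a-1-t$ for $t=0,1,\dots,n-2$ and comparing with $F(A)=(b^n-1)a+(a-1)d$ (the value from Theorem~\ref{m2nd-Mersen}) gives $PF(A)=\{F(A),F(A)-d,\dots,F(A)-(n-2)d\}$, and therefore $t(A)=n-1$.
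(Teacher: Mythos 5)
Your proposal is correct and takes essentially the same route as the paper: the corollary is just the $d=1$ specialization of Theorem~\ref{gener-mers-type}, and your argument reproduces that theorem's proof --- identifying the $\preceq_{\langle A\rangle}$-maximal Ap\'ery elements as the $N_{dr}$ whose greedy presentations are $(0,\dots,0,b,b-1,\dots,b-1)$ for the residues $r=a-1,\dots,a-(n-1)$, proving $td\notin\langle A\rangle$ for $1\le t\le n-2$ from $\gcd(a,d)=1$ and $a>n-2$, and then applying Lemma~\ref{Pseudo-FP-Prop} together with $F(A)=(b^n-1)a+(a-1)d$. If anything, your ``raising'' argument showing that every other greedy presentation fails to be maximal is more detailed than the paper's treatment, which asserts this narrowing step with only a brief remark.
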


\section{Concluding Remark}

Our model $A=\left(a,ba+d,b^2a+\frac{b^2-1}{b-1}d,...,b^ka+\frac{b^k-1}{b-1}d\right)$ can be used to explain the following eight numerical semigroups in the literature.

1. If $a=2^n-1$, $b=2$, $d=1$, $k=n-1$ and $n\geq 2$, then
$$A=(2^n-1,2^{n+1}-1,2^{n+2}-1...,2^{2n-1}-1),$$
and the $\langle A\rangle$ is the Mersenne numerical semigroups $S(n)$ in \cite{Rosales2016}.

2. If $a=3\cdot 2^n-1$, $b=2$, $d=1$, $k=n+1$ and $n\geq 1$, then
$$A=(3\cdot 2^n-1, 3\cdot 2^{n+1}-1, 3\cdot 2^{n+2}-1,...,3\cdot 2^{2n+1}-1),$$
and the $\langle A\rangle$ is the Thabit numerical semigroups $T(n)$ in \cite{Rosales2015}.

3. If $a=(2^m-1)\cdot 2^n-1$, $b=2$, $d=1$, $k=n+m-1$, $n\geq 1$ and $2\leq m\leq 2^n$, then
$$A=((2^m-1)\cdot 2^n-1, (2^m-1)\cdot 2^{n+1}-1,...,(2^{m}-1)\cdot 2^{2n+m-1}-1),$$
and the $\langle A\rangle$ is a class of numerical semigroups $S(m,n)$ in \cite{GuZeTang}.

4. Let $m,n\in \mathbb{N}$ and $m\geq 2$ and
$$\begin{aligned}
  \delta=
\left\{
    \begin{array}{lc}
    1&\ \text{if}\ \ n=0;\ \ \ \ \ \ \ \ \ \ \ \\
    m&\ \text{if}\ \ n\neq 0, m\leq n;\\
    m-1&\ \text{if}\ \ n\neq 0, m> n.\\
    \end{array}
\right.
\end{aligned}$$
If $a=(2^m+1)\cdot 2^n-(2^m-1)$, $b=2$, $d=2^m-1$ and $k=n+\delta$, then the $\langle A\rangle$ is a class of numerical semigroups $GT(n,m)$ in \cite{KyunghwanSong}.

5. If $a=m(2^k-1)+2^{k-1}-1$, $b=2$, $m\geq 1$, $k\geq 3$, then the $\langle A\rangle$ is a class of numerical semigroups in \cite{LiuXin23}.

6. If $a=\frac{b^n-1}{b-1}$, $b\geq 2$, $d=1$, $k=n-1$ and $n\geq 2$, then
$$A=\left(\frac{b^n-1}{b-1}, \frac{b^{n+1}-1}{b-1},\frac{b^{n+2}-1}{b-1},...,\frac{b^{2n-1}-1}{b-1}\right),$$
and the $\langle A\rangle$ is the repunit numerical semigroups $S(b,n)$ in \cite{Rosales.Repunit}.

7. If $a=b^{n+1}+\frac{b^n-1}{b-1}$, $b\geq 2$, $d=1$, $k=n+1$, then
$$A=\left(b^{n+1}+\frac{b^n-1}{b-1}, b^{n+2}+\frac{b^{n+1}-1}{b-1},...,b^{2n+2}+\frac{b^{2n+1}-1}{b-1}\right),$$
and the $\langle A\rangle$ is a class of numerical semigroups $S(b,n)$ in \cite{GuZe2020}.

8. If $a=(b+1)b^n-1$, $b\geq 2$, $d=b-1$, $k=n+1$, then
$$A=((b+1)b^n-1,(b+1)b^{n+1}-1,...,(b+1)b^{2n+1}-1),$$
and the $\langle A\rangle$ is the Thabit numerical semigroup $T_{b,1}(n)$ of the first kind base $b$ in \cite{KyunghwanSong2020}.

\noindent
{\small \textbf{Acknowledgements:}
This work was partially supported by the National Natural Science Foundation of China [12071311].


\begin{thebibliography}{99}
\bibitem{AnnAdamaszek} A. Adamaszek, M. Adamaszek, \emph{Combinatorics of the change-making problem}, European. J. Comb. 31 (2010), 47--63.

\bibitem{A.Assi} A. Assi, M. D'Anna, P. A. Garc\'ia-S\'anchez,  \emph{Numerical Semigroups and Applications},  Second Edition. Vol.3, RSMS Springer, Cham (2020).


\bibitem{A. Brauer} A. Brauer,  \emph{On a problem of partitions},  Amer. J. Math. 64 (1942), 299--312.


\bibitem{J. E. Shockley} A. Brauer, J. E. Shockley, \emph{On a problem of Frobenius},  J. Reine Angew. Math. 211 (1962),  215--220.

\bibitem{LJCowen} L. J. Cowen, R. Cowen, A. Steinberg, \emph{Totally greedy coin sets and greedy obstructions}, Electron. J. Comb. 15 (2008), \#R90.

\bibitem{F.Curtis} F. Curtis,  \emph{On formulas for the Frobenius number of a numerical semigroup},  Math. Scand. 67 (1990), 190--192.


\bibitem{GuZeTang} Z. Gu, X. Tang, \emph{The Frobenius probelm for a class of numerical semigroups}, Int. J. Number Theory. 13 (2017), 1335--1347.

\bibitem{GuZe2020} Z. Gu, \emph{On the numerical semigroup generated by $\{b^{n+1+i}+\frac{b^{n+i}-1}{b-1}\mid i\in \mathbb{N}\}$}, Discrete Math. Appl. 30(4) (2020), 257--264.


\bibitem{TCHu} T. C. Hu, M. L. Lenard, \emph{Optimality of a Heuristic solution for a class of Knapsack problems},  Operations Research. 24(1) (1976), 193--196.


\bibitem{M.Lepilov} M. Lepilov, J. O'Rourke, I. Swanson, \emph{Frobenius numbers of numerical semigroups generated by three consecutive squares or cubes},  Semigroup Forum. 91 (2015), 238--259.



\bibitem{Fliuxin22} F. Liu, G. Xin, \emph{On Frobenius formulas of power sequences}, arXiv:2210.02722, (2022).

\bibitem{Liu-Xin} F. Liu, G. Xin, \emph{A combinatorial approach to Frobenius numbers of some special sequences (Complete Version)}, arXiv:2303.07149. (2023).

\bibitem{LiuXin23} F. Liu, G. Xin, \emph{A generalization of Mersenne and Thabit numerical semigroups}, arXiv:2306.03459. (2023).


\bibitem{MJMagazine} M. J. Magazine, G. L. Nemhauser, L. E. Trotter Jr,  \emph{When the greedy solution solves a class of Knapsack problems}, Operations Research. 23(2) (1975), 207--217.


\bibitem{J.M.Marin} J. M. Mar\'in, J. L. Ram\'irez Alfons\'in, M. P. Revuelta, \emph{On the Frobenius number of Fibonacci numerical semigroups},  Integers. 7 (2007), A14.


\bibitem{D.C.Ong} D. C. Ong, V. Ponomarenko, \emph{The Frobenius number of geometric sequences},  Integers. 8 (2008), A33.


\bibitem{Ramrez Alfonsn} J. L. Ram\'irez Alfons\'in, \emph{The Diophantine Frobenius Problem}, Oxford Lecture Series in Mathematics and Its Applications, vol. 30, Oxford University Press. (2005).


\bibitem{Roberts1} J. B. Roberts,  \emph{Note on linear forms},  Proc. Amer. Math. Soc. 7 (1956), 465--469.


\bibitem{AMRobles} A. M. Robles-P\'erez, J. C. Rosales,  \emph{The Frobenius number for sequences of triangular and tetrahedral numbers},  J. Number Theory. 186 (2018),  473--492.

\bibitem{Rodseth} \"O. J. R\"odseth,  \emph{On a linear diophantine problem of Frobenius II},  J. Reine Angew Math. 307/308 (1979), 431--440.



\bibitem{Rosales2002} J. C. Rosales, M. B. Branco, \emph{Numerical semigroups that can be expressed as an intersection of symmetric numerical semigroups}, J. Pure Appl. Algebra. 171 (2002), 303--314.

\bibitem{Rosales2015} J. C. Rosales, M. B. Branco, D. Torr\~ao, \emph{The Frobenius problem for Thabit numerical semigroups},  J. Number Theory. 155 (2015), 85--99.


\bibitem{Rosales.Repunit} J. C. Rosales, M. B. Branco, D. Torr\~ao, \emph{The Frobenius problem for repunit numerical semigroups},  Ramanujan J. 40 (2016), 323--334.


\bibitem{Rosales2016} J. C. Rosales, M. B. Branco, D. Torr\~ao, \emph{The Frobenius problem for Mersenne numerical semigroups},  Math. Z. 286 (2017), 1--9.


\bibitem{J.C.Rosales} J. C. Rosales, P. A. Garc\'ia-S\'anchez,  \emph{Numerical Semigroups},  Developments in Mathematics. Vol.20, Springer, New York (2009).



\bibitem{E. S. Selmer} E. S. Selmer,  \emph{On the linear Diophantine problem of Frobenius},  J. Reine Angew. Math. 293/294 (1977), 1--17.



\bibitem{KyunghwanSong2020} Kyunghwan Song, \emph{The Frobenius problem for numerical semigroups generated by the Thabit numbers of the first, second kind base $b$ and the Cunningham numbers}, Bull. Korean Math. Soc. 57 (2020), 623--647.


\bibitem{KyunghwanSong} Kyunghwan Song, \emph{The Frobenius problem for extended Thabit numerical semigroups}, Integers. 21 (2021), A17.



\bibitem{J. J. Sylvester1} J. J. Sylvester,  \emph{On sub-invariants,  i.e., semi-invariants to binary quanties of an unlimited order},  Amer. J. Math. 5 (1882),  119--136.


\bibitem{A. Tripathi3} A. Tripathi, \emph{On the Frobenius problem for $\{a,ha+d,ha+bd,ha+b^2d,...,ha+b^kd\}$}, J. Number Theory. 162 (2016), 212--223.



\bibitem{A. Tripathi1} A. Tripathi,  \emph{Formulate for the Frobenius number in three variables},  J. Number Theory. 170 (2017),  368--389.

\end{thebibliography}
\end{document}